\newtheorem{theorem}{Theorem}[section]
\newtheorem{corollary}[theorem]{Corollary}
\newtheorem{lemma}[theorem]{Lemma}
\newtheorem{definition}[theorem]{Definition}
\newtheorem*{theorem*}{Theorem}
\newtheorem*{lemma*}{Lemma}
\newtheorem*{remark*}{Remark}
\newtheorem*{definition*}{Definition}
\newtheorem*{proposition*}{Proposition}
\newtheorem*{corollary*}{Corollary}
\numberwithin{equation}{section}
\newcommand{\real}{\mathbb{R}}
\let\ced=\c         
\def\qed{\,\unskip\kern 6pt \penalty 500
\raise -2pt\hbox{\vrule \vbox to8pt{\hrule width 6pt
\vfill\hrule}\vrule}\par}
\definecolor{darkblue}{rgb}{0.05, .05, .65}
\definecolor{darkgreen}{rgb}{0.1, .65, .1}
\definecolor{darkred}{rgb}{0.8,0,0}
\newcommand{\beqn}{\begin{equation}}
\newcommand{\eeqn}{\end{equation}}
\newcommand{\bear}{\begin{eqnarray}}
\newcommand{\eear}{\end{eqnarray}}
\newcommand{\bean}{\begin{eqnarray*}}
\newcommand{\eean}{\end{eqnarray*}}
\begin{document}

\title{\huge \bf Finite time extinction for a diffusion equation with spatially inhomogeneous strong absorption}

\author{
\Large Razvan Gabriel Iagar\,\footnote{Departamento de Matem\'{a}tica
Aplicada, Ciencia e Ingenieria de los Materiales y Tecnologia
Electr\'onica, Universidad Rey Juan Carlos, M\'{o}stoles,
28933, Madrid, Spain, \textit{e-mail:} razvan.iagar@urjc.es}
\\[4pt] \Large Philippe Lauren\ced{c}ot\,\footnote{Institut de
Math\'ematiques de Toulouse, CNRS UMR~5219, Universit\'e Paul Sabatier, F--31062 Toulouse Cedex 9, France. \textit{e-mail:}
Philippe.Laurencot@math.univ-toulouse.fr}\\ [4pt] }
\date{}
\maketitle

\begin{abstract}
The phenomenon of finite time extinction of bounded and non-negative solutions to the diffusion equation with strong absorption
$$
\partial_t u-\Delta u^m+|x|^{\sigma}u^q=0, \qquad (t,x)\in(0,\infty)\times\real^N,
$$
with $m\geq1$, $q\in(0,1)$ and $\sigma>0$, is addressed. Introducing the critical exponent $\sigma^* := 2(1-q)/(m-1)$ for $m>1$ and $\sigma_*=\infty$ for $m=1$, extinction in finite time is known to take place for $\sigma\in [0,\sigma^*)$ and an alternative proof is provided therein. When $m>1$ and $\sigma\ge \sigma^*$, the occurrence of finite time extinction is proved for a specific class of initial conditions, thereby supplementing results on non-extinction that are available in that range of $\sigma$ and showing their sharpness.
\end{abstract}

\bigskip

\noindent {\bf AMS Subject Classification 2010:} 35B33, 35B40, 35K55, 35K65.

\smallskip

\noindent {\bf Keywords and phrases:} porous medium equation, strong absorption, finite time extinction, inhomogeneous absorption.

\section{Introduction}

The aim of this short note is to address the question of finite time extinction of bounded and non-negative solutions to the following diffusion equation with spatially inhomogeneous strong absorption
\begin{equation}\label{eq1}
\partial_tu=\Delta u^m-|x|^{\sigma}u^q, \qquad (t,x)\in(0,\infty)\times\real^N,
\end{equation}
with initial condition
\begin{equation}\label{init.cond}
	u(0)=u_0\in L_+^{\infty}(\real^N) := \big\{ z\in L^\infty(\real^N)\ :\ z(x)\ge 0 \;\text{ a.e. in }\; \real^N \big\},
\end{equation}
where
\begin{equation}\label{range.exp}
m\geq1, \qquad 0<q<1, \qquad \sigma>0.
\end{equation}
The dynamics of Eq.~\eqref{eq1} features a double competition: on the one hand, the effects of the diffusion, which spreads the mass of the solutions as time advances, competes with the absorption term, which triggers a loss of mass, possibly leading to vanishing in finite time. On the other hand, the weight on the absorption term is likely to bring into play significant imbalances between the properties of the solutions in a neighborhood of the origin (where, at least formally, the absorption term is very small) and in outer regions, at uniform distance from the origin, where the weight $|x|^{\sigma}$ becomes very strong.

With respect to the first of the two competitions described above, the spatially homogeneous counterpart of Eq.~\eqref{eq1} with $\sigma=0$, that is
\begin{equation}\label{eq1.hom}
\partial_tu-\Delta u^m+u^q=0, \qquad m\geq1,
\end{equation}
has a well understood dynamics at least in some ranges of exponents. Indeed, the critical values for the absorption exponent $q$ are $q=m$ and $q=1$, and its solutions have very different properties according to whether $q>m$, $1<q\leq m$ or $0<q<1$. The former of these three ranges sees the diffusion term being sufficiently strong either to govern the dynamics, or to balance the effects of the absorption giving rise to profiles known as very singular solutions. Properties such as decay estimates for the solutions, construction of singular (or very singular) self-similar solutions and large time behavior of solutions in the sense of convergence to such self-similar profiles as $t\to\infty$ are established in a number of works, see for example \cite{KP86, PT86, KU87, KV88, KPV89, Le97, Kwak98} and references therein. While for $q>m$ all compactly supported solutions have an algebraic time decay as $t\to\infty$ and their supports expand reaching the whole space in the limit, a different situation occurs for $m>1$ and $q\in(1,m)$. In the latter range, the effect of the absorption is dominant and the expansion of the positivity region of the compactly supported solutions is limited, leading to the \emph{localization of supports}, that is, there exists a large radius $R>0$ not depending on time such that ${\rm supp}\,u(t)\subseteq B(0,R)$ for any $t>0$. Solutions still present an algebraic decay as $t\to\infty$, but self-similar solutions might become unbounded, presenting a specific growth at infinity \cite{MPV91, CVW97} and delicate descriptions of the large time behavior in the form of matched asymptotics between ``flat" solutions of the form $K_*t^{-1/(q-1)}$ for some explicit constant $K_*>0$, and boundary layers appearing near the boundary of the localization ball $B(0,R)$, have been established, see \cite{CV99}. Working with spatially inhomogeneous absorption with general weights, Peletier and Tesei established in dimension $N=1$ in \cite{PT85,PT86b} positivity of supports for $q>m$, conditions for localization of supports of the solutions for $1<q<m$ and the existence of stationary compactly supported solutions also for $1<q<m$.

The dynamics of both Eq.~\eqref{eq1.hom} and Eq.~\eqref{eq1} seem to be by far more involved in the range $m\geq1$ and $0<q<1$, also known as the \emph{strong absorption range} due to the fact that the absorption term prevails. This dominance gives rise to two new mathematical phenomena not present for $q\geq1$. On the one hand, \emph{finite time extinction} of (non-negative bounded) solutions occurs. This means that there exists a time $T_e\in(0,\infty)$ such that $u(t)\not\equiv0$ for $t\in(0,T_e)$ but $u(T_e)\equiv0$, $T_e$ being thus called the extinction time of $u$. The finite time extinction stems from the ordinary differential equation $\partial_t u=u^q$ obtained by neglecting the diffusion, emphasizing thus the strength of the absorption, see \cite{Ka75, Ka84}. On the other hand, \emph{instantaneous shrinking of supports} of solutions to Eq.~\eqref{eq1.hom} with bounded initial condition $u_0$ such that $u_0(x)\to0$ as $|x|\to\infty$ takes place, that is, for any non-negative initial condition $u_0\in L^{\infty}(\real^N)$ such that $u_0(x)\to0$ as $|x|\to\infty$ and $\tau>0$, there is $R(\tau)>0$ such that ${\rm supp}\,u(t)\subseteq B(0,R(\tau))$ for all $t\ge\tau$. This is once more due to the strength of the absorption term, which involves a very quick loss of mass, and has been proved in \cite{EK79, Ka84} in the semilinear case $m=1$ and in \cite{Abd98} for $m>1$. Finer properties of the dynamics of Eq.~\eqref{eq1.hom} in this range are still lacking in general. For example, a description of the extinction rates and behavior near the extinction time of the solutions to Eq.~\eqref{eq1.hom} seems to be only available when $m+q=2$ in \cite{GV94}, revealing a case of asymptotic simplification, and appears to be a complicated problem if $m+q\neq2$.

Let us now turn our attention to Eq.~\eqref{eq1} with exponents as in \eqref{range.exp} and introduce the critical exponent
\begin{equation}\label{crit.exp}
\sigma^*:=\left\{\begin{array}{ll}
	\displaystyle{\frac{2(1-q)}{m-1}}>0, & {\rm if} \ m>1, \\
	 & \\
	 \infty, & {\rm if} \ m=1.\end{array}\right.
\end{equation}
It is established in \cite{BHV01, KV97} ($m=1$) and in \cite{Belaud01,BeSh07} ($m>1$) that any solution to Eq.~\eqref{eq1} posed in a bounded domain $\Omega\subset\real^N$ with homogeneous Neumann boundary condition vanishes in finite time provided $0<\sigma<\sigma^*$ (the analysis performed in the above mentioned references actually deals with more general weights instead of $|x|^\sigma$). A similar result is shown in \cite{BHV01, BD10} ($m=1$) and in \cite{BeSh22} ($m>1$) for homogeneous Dirichlet boundary conditions. A direct consequence of the latter and the localization of supports established in \cite[Theorem~1.1]{ILS22} is that bounded and non-negative weak solutions to~\eqref{eq1}-\eqref{init.cond} vanish identically after a finite time when $\sigma\in [0,\sigma^*)$. We shall recall this result in Theorem~\ref{th.1} below and provide an alternative proof, relying on integral inequalities and the well-known $L^1-L^\infty$-regularizing effect of the heat equation or the porous medium equation in $\real^N$. In the complementary range $m>1$ and $\sigma\ge \sigma^*$, fewer results seem to be available but finite time extinction is not a generic feature. Indeed, we recently proved in \cite{ILS22} that, when $m>1$, $\sigma>\sigma^*$, and the initial condition $u_0$ is positive in a neighborhood of the origin, the positivity set
\begin{equation*}
	\mathcal{P}(t) := \{ x\in\real^N\ :\ u(t,x)>0\}
\end{equation*}
of the solution $u$ to \eqref{eq1}-\eqref{init.cond} is non-empty and contains the origin for all $t\ge 0$. Moreover, $\mathcal{P}(t)$ shrinks to $\{0\}$ as $t\to\infty$. We also identify in \cite{ILS22} a class of initial conditions $u_0$ for which $0\not\in\mathcal{P}(t)$ for all $t\ge 0$; that is, $u(t)$ vanishes at the origin where the absorption is the weakest. It is then tempting to figure out whether finite time extinction could occur and the purpose of this note is to answer this question by the affirmative.

\medskip

\noindent \textbf{Main results}. We deal in this note with the Cauchy problem~\eqref{eq1}~-\eqref{init.cond} and first make precise the notion of solution we are using in the present work.
\begin{definition}\label{def.wp}
A non-negative weak solution to the Cauchy problem \eqref{eq1}-\eqref{init.cond} is a function
\begin{subequations}\label{wp4}
\begin{equation}
	u \in L_+^\infty((0,\infty)\times\real^N) \label{wp4a}
\end{equation}
such that, for all $T>0$,
\begin{equation}
	u^m \in L^2\big((0,T),H^1_{{\rm loc}}(\real^N)\big) \label{wp4b}
\end{equation}
and
\begin{equation}
	\int_0^T \int_{\real^N} \Big[ (u_0-u) \partial_t \zeta + \nabla u^m \cdot \nabla\zeta + |x|^\sigma u^q \zeta \Big]\ dxds = 0 \label{wp4c}
\end{equation}
for all $\zeta\in C_c^1([0,T)\times\real^N)$.
\end{subequations}
\end{definition}
Basic results of well-posedness of the Cauchy problem and of instantaneous shrinking and localization of supports of solutions in the framework of Definition~\ref{def.wp} are established in \cite[Theorem~1.1]{ILS22}, the latter being restated as a preliminary fact at the beginning of Section~\ref{sec.th1}. Let us next recall that finite time extinction \emph{always occurs} if $\sigma<\sigma^*$, see \cite[Theorem~3.1]{BHV01} and \cite[Theorem~3.16]{BD10} ($m=1$) and \cite[Theorem~2.1]{BeSh22} ($m>1$).

\begin{theorem}\label{th.1}
If $m$ and $q$ are as in \eqref{range.exp}, $u_0$ is as in \eqref{init.cond} and $\sigma<\sigma^*$, then the solution $u$ to the Cauchy problem \eqref{eq1}-\eqref{init.cond} vanishes in finite time.
\end{theorem}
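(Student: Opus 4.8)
The plan is to run a quantitative mass-dissipation argument on the \emph{localized} support of the solution and to close it against the smoothing effect of the diffusion. First I would invoke the preliminary fact (the restatement of \cite[Theorem~1.1]{ILS22}) to fix $R>0$ and $\tau>0$ with $\mathrm{supp}\,u(t)\subseteq B(0,R)$ for all $t\ge\tau$; this is what makes $u(t)\in L^1(\real^N)$ for $t\ge\tau$, a point that matters since $u_0$ is only bounded. Inserting in the weak formulation \eqref{wp4c} a cut-off equal to $1$ on $B(0,R)$, the diffusion term drops out (compact support), leaving the mass identity
\[
\frac{d}{dt}\|u(t)\|_{L^1} = -\int_{\real^N}|x|^\sigma u(t,x)^q\,dx =: -I(t), \qquad t\ge\tau.
\]

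Next I would turn $I(t)$ into a lower bound for the dissipation of $M(t):=\|u(t)\|_{L^1}$. Writing $A(t):=\|u(t)\|_{L^\infty}$ and splitting $\real^N$ into $\{|x|<\rho\}$ and $\{|x|\ge\rho\}$, the elementary bounds $\int_{|x|<\rho}u\,dx\le\omega_N A\rho^N$ and $\int_{|x|\ge\rho}u\,dx\le A^{1-q}\rho^{-\sigma}I$ (using $u\le A^{1-q}u^q$ and $|x|^\sigma\ge\rho^\sigma$ on the outer region) give $M\le\omega_N A\rho^N+A^{1-q}\rho^{-\sigma}I$ for every $\rho>0$. Optimizing over $\rho$ yields, with $\delta:=(1-q)+\sigma/N$, the interpolation inequality
\[
-M'(t)=I(t)\ge c\,A(t)^{-\delta}\,M(t)^{1+\sigma/N}, \qquad t\ge\tau,
\]
for some $c=c(N,q,\sigma)>0$.

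The only non-self-contained input is then the $L^1$--$L^\infty$ smoothing estimate. Since the absorption term is non-negative, $u$ is a subsolution of the porous medium equation, so comparison with its solution started at time $s$ and the classical smoothing effect give $A(t)\le C(t-s)^{-\alpha}M(s)^\gamma$ for $\tau\le s<t$, with $\alpha=N/[N(m-1)+2]$ and $\gamma=2/[N(m-1)+2]$ (and $\alpha=N/2$, $\gamma=1$ when $m=1$). I would couple the two inequalities over a sequence of time windows: on the second half of $[s,s+h]$ one has $A\le C(h/2)^{-\alpha}M(s)^\gamma$, which inserted into the dissipation inequality and integrated forces the mass to halve over $[s,s+h]$ as soon as $h\ge c\,M(s)^{e}$ with
\[
e=\frac{\gamma\delta-\sigma/N}{1+\alpha\delta}, \qquad \gamma\delta-\frac{\sigma}{N}=\frac{2(1-q)-(m-1)\sigma}{N(m-1)+2}.
\]
Iterating with mass levels $M_k=2^{-k}M(\tau)$ gives window lengths $h_k\simeq M_k^{e}=2^{-ke}M(\tau)^e$, and the extinction time is bounded by $\sum_k h_k$, a geometric-type series that converges \emph{precisely} when $e>0$, i.e.\ when $2(1-q)-(m-1)\sigma>0$, that is $\sigma<\sigma^*$ (the condition being vacuous for $m=1$, where $\sigma^*=\infty$).

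The main obstacle is exactly this closure. The interpolation inequality alone is super-linear in $M$ (power $1+\sigma/N>1$) and therefore yields only an algebraic decay rate, never extinction; the feedback with the smoothing estimate is indispensable, yet $A(t)$ cannot be controlled by $M(t)$ at the same instant, which is why one is forced into the time-window iteration rather than a single differential inequality. Tracking the convergence of the series of window lengths is where the critical exponent $\sigma^*$ enters, and I expect the delicate bookkeeping to lie in making the ``halving over one window'' step rigorous uniformly down to extinction.
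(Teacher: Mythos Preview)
Your proposal is correct and follows the same overall strategy as the paper: localize the support, derive a dissipation inequality for the $L^1$-mass involving $\|u\|_\infty$, feed in the $L^1$--$L^\infty$ smoothing of the porous medium (or heat) equation, and iterate to force extinction precisely when $\sigma<\sigma^*$. The differences are in implementation. For the interpolation step, the paper inserts an auxiliary factor $\|u(s)\|_\infty^b$ (with $b$ chosen in the interval $(\sigma/N+1-q,\sigma^*/N+1-q)$) and applies H\"older's inequality on $B(0,R)$ to get $\|u(s)\|_1^{b+q}\le C\int|x|^\sigma u^{b+q}\,dx$, whereas you split over $\{|x|<\rho\}\cup\{|x|\ge\rho\}$ and optimize in $\rho$; both produce an inequality of the same structure and both require the support bound to be effective. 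For the closure, the paper integrates in time to reach an estimate of the form $\|u(T)\|_1\le C(T-t)^{-\beta}\|u(t)\|_1^\kappa$ with $\kappa>1$ and then invokes Stampacchia's iteration lemma \cite[Lemme~4.1]{St65}, while your halving-over-time-windows argument is an explicit unpacking of that very lemma. Your route is a bit more self-contained (no external lemma, and the geometric series makes the role of $e>0$ transparent); the paper's is slightly more streamlined. In both arguments the threshold $\sigma^*$ appears at exactly the same place, namely as the condition making the exponent governing the iteration strictly positive.
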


As already pointed out, Theorem~\ref{th.1} follows from the above mentioned references and the property of localization of supports of any solution to the Cauchy problem~\eqref{eq1}-\eqref{init.cond}. Nevertheless, we give a short and much simpler proof based on estimates on the $L^1$ norm of the solution at different positive times, which also works for the spatially homogeneous equation~\eqref{eq1.hom}. Let us also stress here that, in the semilinear case $m=1$, $\sigma^*=\infty$ and all bounded and non-negative solutions vanish in finite time.

However, things are a bit more complex in the range $m>1$ and $\sigma\geq\sigma^*$. Indeed, for $\sigma>\sigma^*$, it is proved in \cite[Theorem~1.3]{ILS22} that solutions to the Cauchy problem~\eqref{eq1}-\eqref{init.cond} with an initial condition $u_0$ which is positive in a small ball $B(0,r)$ converge as $t\to\infty$ to a unique non-zero self-similar solution and in particular do no longer vanish in finite time. We thus infer that we need to \emph{restrict the class of initial conditions} $u_0$ for the finite time extinction to hold true. This is made precise in the following statement, which is the main result of this note.

\begin{theorem}\label{th.2}
Let $m>1$, $0<q<1$ and $\sigma\geq\sigma^*$. Consider $a$, $A$ and $R>0$ such that $a>\sigma/(1-q)$ and
\begin{equation}\label{interm1}
A^{m-q}R^{a(m-q)-\sigma-2}\leq\frac{1}{am(am+N-2)}.
\end{equation}
Let $u_0\in L_+^{\infty}(\real^N)$ be such that
\begin{subequations}\label{cond.u0}
\begin{equation}
	u_0(x)\leq A|x|^{a}, \qquad {\rm for \ any} \ x\in B(0,R), \label{cond.u0a}
\end{equation}
and
\begin{equation}
	\|u_0\|_{\infty}\leq AR^a. \label{cond.u0b}
\end{equation}
\end{subequations}
Then the solution $u$ to \eqref{eq1}-\eqref{init.cond} vanishes in finite time.
\end{theorem}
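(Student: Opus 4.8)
The plan is to establish extinction by comparison with an explicit separable supersolution $\bar u(t,x)=g(t)W(x)$, where the spatial profile $W$ is prescribed by the structural hypotheses \eqref{cond.u0}--\eqref{interm1} and the time factor $g$ is chosen to vanish in finite time. Guided by \eqref{cond.u0a}--\eqref{cond.u0b}, I would take
\[
W(x):=A|x|^a \ \text{ for } |x|\le R, \qquad W(x):=AR^a \ \text{ for } |x|>R ,
\]
so that the two conditions in \eqref{cond.u0} say precisely $u_0\le W$, and I would take $g$ nonincreasing with $g(0)=1$. The first task is to identify the role of \eqref{interm1}: using $\Delta|x|^k=k(k+N-2)|x|^{k-2}$, the stationary supersolution inequality $\Delta W^m\le|x|^\sigma W^q$ on $B(0,R)$ reduces to
\[
\chi(r):=A^{m-q}\,am(am+N-2)\,r^{a(m-q)-\sigma-2}\le 1 , \qquad 0<r\le R .
\]
Here both hypotheses are used: combining $a>\sigma/(1-q)$ with $\sigma\ge\sigma^*$ yields $a(m-q)-\sigma-2>\sigma(m-1)/(1-q)-2\ge0$, so the exponent is positive, $\chi$ is increasing, and \eqref{interm1} (which is exactly $\chi(R)\le1$) forces $\chi(r)\le1$ on all of $(0,R]$. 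Across the sphere $|x|=R$ the radial derivative of $W^m$ drops to zero, giving $W$ a concave kink whose contribution to $\Delta W^m$ is a nonpositive measure; this only helps the supersolution inequality.

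Next I would insert $\bar u=g(t)W$ and choose $g$ to solve $g'=-\varepsilon g^q$ with $g(0)=1$. Dividing the resulting inequality by $g^q>0$, the supersolution condition becomes
\[
-\varepsilon W+|x|^\sigma W^q\ge g^{m-q}\,\Delta W^m .
\]
The decisive structural observation is that $0\le g\le1$ together with $m>q$ forces $g^{m-q}\le1$; since $\Delta W^m\ge0$ on $B(0,R)$ (note $am(am+N-2)>0$, as required for \eqref{interm1} to be meaningful) and $\Delta W^m=0$ for $|x|>R$, it suffices to check $-\varepsilon W+|x|^\sigma W^q\ge\Delta W^m$. On $B(0,R)$ this rearranges to $\varepsilon A^{1-q}r^{a(1-q)-\sigma}\le 1-\chi(r)$, and on $\{|x|>R\}$ to $\varepsilon\le|x|^\sigma(AR^a)^{q-1}$; because $a(1-q)-\sigma>0$, both are worst at $r=R$, so the single choice $\varepsilon=(1-\chi(R))\,A^{q-1}R^{\sigma-a(1-q)}>0$ validates the inequality. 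This quantifies the mechanism behind the theorem: the fast vanishing $u_0\lesssim|x|^a$ near the origin and the smallness \eqref{interm1} cooperate so that the (positive) diffusion $\Delta W^m$ is dominated by the absorption precisely through the gain $g^{m-q}\le1$.

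Since $g(t)=[1-(1-q)\varepsilon t]_+^{1/(1-q)}$ vanishes at $T=1/((1-q)\varepsilon)$ and $\bar u(0)=W\ge u_0$, the comparison principle attached to Definition~\ref{def.wp} then gives $0\le u(t,x)\le g(t)W(x)$ for all $t\ge0$, whence $u(t)\equiv0$ for $t\ge T$, which is the asserted finite time extinction. Invoking comparison here requires justifying it within the weak framework of Definition~\ref{def.wp} for the profile $\bar u$, which is only Lipschitz across $|x|=R$; the favorable sign of the kink measure must be carried through the weak formulation, but this is standard for porous medium type equations and is covered by the well-posedness theory recalled from \cite{ILS22}.

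The main obstacle I anticipate is the borderline case of equality in \eqref{interm1}, i.e. $\chi(R)=1$: then the admissible $\varepsilon$ collapses to zero and $W$ degenerates into a critical stationary supersolution from which $g$ cannot begin to decrease. Naive fixes fail, since enlarging $A$ violates \eqref{interm1}, shrinking $R$ destroys the domination $u_0\le W$ in the annulus, and the problem is scaling-invariant so the critical condition is preserved under the equation's natural scaling. I would therefore treat the strict inequality as the core case and recover the equality case by a limiting argument: for $t>0$ the solution lies strictly below $W$ (by instantaneous shrinking from \cite{ILS22} and the strong comparison principle), so restarting the construction from a small positive time at which $\sup_x u(t,\cdot)/W<1$ restores a strict inequality and lets $g$ start decreasing. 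Making this uniform strict gap quantitative near the origin and near $|x|=R$ is the delicate point, and I expect it to demand the most care.
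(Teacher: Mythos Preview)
Your construction is correct in the strict case $\chi(R)<1$ and yields a self-contained argument with an explicit extinction bound $T=A^{1-q}R^{a(1-q)-\sigma}/[(1-q)(1-\chi(R))]$, which is a genuine bonus. The paper takes a different route: it uses the stationary barrier $S_{a,A}(x)=A|x|^a$ only on $B(0,R)$ (with the boundary condition supplied by~\eqref{cond.u0b} and~\eqref{wp0}) to obtain $u(t,x)\le A|x|^a$ on $(0,\infty)\times B(0,R)$, and then inverts this into a pointwise lower bound on the absorption, $|x|^\sigma u^q\ge B\,u^{(aq+\sigma)/a}$ on all of $\real^N$, with $(aq+\sigma)/a<1$. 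Comparison with the spatially homogeneous problem $\partial_t v-\Delta v^m+Bv^{(aq+\sigma)/a}=0$ then gives extinction via the known theory. In short, you build a time-dependent separable supersolution on the whole space, while the paper uses a stationary barrier on a ball to reduce to a homogeneous strong-absorption equation.

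The weak point of your proposal is the equality case $\chi(R)=1$. Your suggested repair via a strong comparison principle is awkward: a uniform gap $\sup_x u(t,\cdot)/W<1$ would require $u(t,x)\le A'|x|^a$ with $A'<A$ near the origin, and extracting this from a strong maximum principle for a degenerate equation precisely on the set where $u$ vanishes is genuinely delicate. There is a much simpler fix: replace $a$ by any $a'\in(\sigma/(1-q),a)$ and set $A':=AR^{a-a'}$. Then $u_0(x)\le A|x|^a\le A'|x|^{a'}$ on $B(0,R)$ and $\|u_0\|_\infty\le AR^a=A'R^{a'}$, while a direct computation gives $(A')^{m-q}R^{a'(m-q)-\sigma-2}=A^{m-q}R^{a(m-q)-\sigma-2}$ and $a'm(a'm+N-2)<am(am+N-2)$, so~\eqref{interm1} becomes strict with the new parameters and your separable barrier applies. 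Note that the paper's reduction does not distinguish strict from nonstrict inequality in~\eqref{interm1} at all, which is one advantage of that approach; on the other hand, your method avoids appealing to extinction for the auxiliary homogeneous equation and is entirely explicit.
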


According to our previous results in \cite{ILS22}, the flatness condition~\eqref{cond.u0a} as $|x|\to0$ is required in order to have finite time extinction. Moreover, in a forthcoming work \cite{ILS22b} we will show that a limitation on $\|u_0\|_{\infty}$ such as the one in \eqref{cond.u0b} is also needed, as a self-similar solution with exponential time decay as $t\to\infty$ and a dead core (that is, support located in an annulus far away from the origin) can be constructed for $\sigma=\sigma^*$. We may thus conclude that our results are qualitatively sharp. A corollary of Theorem~\ref{th.2} which provides examples of initial conditions $u_0$ to which it applies is given in Section~\ref{sec.th2} after its proof.

\section{Proof of Theorem \ref{th.1}}\label{sec.th1}

This section is dedicated to the proof of Theorem~\ref{th.1}. Before the beginning of the proof, we recall here as a preliminary the precise statement of the well-posedness theorem for Eq.~\eqref{eq1}, which can be found in \cite[Theorem~1.1]{ILS22}. It includes properties such as instantaneous shrinking, localization of supports of bounded solutions and the comparison principle, that will be used in the sequel.
\begin{theorem}\label{th.wp}
For any $m>1$, $q\in(0,1)$ and $\sigma>0$, there is a unique non-negative weak solution to the Cauchy problem~\eqref{eq1}, \eqref{init.cond} which satisfies
\begin{equation}
		\|u(t)\|_\infty \le \|u_0\|_\infty\,, \qquad t\ge 0. \label{wp0}
\end{equation}
In addition, it enjoys the properties of \emph{instantaneous shrinking} and \emph{localization} of the support; that is, for any $t>0$, $u(t)$ has compact support and, given $\tau>0$, there exists $R=R(\tau)>0$, depending on $u_0$ and $\tau$ but not on $t\in [\tau,\infty)$, such that
	$$
	{\rm supp}\,u(t)\subseteq B(0,R(\tau)), \qquad {\rm for \ any} \ t\geq\tau.
	$$
Also, the following \emph{comparison principle} holds true: given $u_{0,i}\in L_+^\infty(\real^N)$, $i=1,2$, such that $u_{0,1}\le u_{0,2}$ in $\real^N$, the corresponding non-negative weak solutions $u_1$ and $u_2$ to~\eqref{eq1}, \eqref{init.cond} satisfy $u_1\le u_2$ in $(0,\infty)\times\real^N$.
\end{theorem}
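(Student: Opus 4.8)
\emph{Strategy and existence.} The statement bundles four largely independent assertions --- existence, the bound \eqref{wp0} and uniqueness, the comparison principle, and the support properties --- and I would prove them in that order, comparison being the tool that drives the rest. For \emph{existence and the bound \eqref{wp0}}, I would construct a solution by an approximation scheme that removes the two singularities of \eqref{eq1}: the degeneracy of $r\mapsto r^m$ at $r=0$ and the failure of Lipschitz continuity of $r\mapsto r^q$ at $r=0$. Concretely, replace $u^m$ by a smooth strictly increasing $\Phi_\varepsilon$ with $\Phi_\varepsilon'\ge\varepsilon>0$, replace $u^q$ by a globally Lipschitz approximation, truncate $\real^N$ to balls $B(0,n)$ with homogeneous Neumann data, and mollify $u_0$. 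Each regularized problem is uniformly parabolic and has a classical solution $u_n$; the maximum principle gives $0\le u_n\le\|u_0\|_\infty$ uniformly, and testing with $\Phi_\varepsilon(u_n)$ produces a uniform bound on $\nabla u_n^m$ in $L^2$ on compact sets, which is precisely \eqref{wp4b}. These estimates, together with the equation, give enough time-compactness (Aubin--Lions) to extract an a.e.\ convergent subsequence; the uniform $L^\infty$-bound then lets one pass to the limit in the absorption term by dominated convergence and recover \eqref{wp4c}. The bound \eqref{wp0} holds because the constant $\|u_0\|_\infty$ is a super-solution of \eqref{eq1}, so it is already a first instance of comparison.

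\emph{Comparison principle and uniqueness.} This is the core. Given solutions $u_1,u_2$ with $u_{0,1}\le u_{0,2}$, I would subtract the weak formulations and test with a smooth non-decreasing approximation $p_\delta$ of $\mathrm{sign}_+$ applied to $u_1^m-u_2^m$. Three favourable structures then cooperate: since $r\mapsto r^m$ is increasing, $\mathrm{sign}(u_1^m-u_2^m)=\mathrm{sign}(u_1-u_2)$ and the time term reproduces, as $\delta\to0$, the derivative of $\int(u_1-u_2)_+\,dx$; the diffusion term equals $-\int p_\delta'(u_1^m-u_2^m)\,|\nabla(u_1^m-u_2^m)|^2\,dx\le0$; and, because $r\mapsto r^q$ is also increasing, on $\{u_1>u_2\}$ the absorption contributes $-\int|x|^\sigma(u_1^q-u_2^q)\,p_\delta(u_1^m-u_2^m)\,dx\le0$. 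Hence $\frac{d}{dt}\int(u_1-u_2)_+\,dx\le0$, and $(u_{0,1}-u_{0,2})_+=0$ forces $u_1\le u_2$; uniqueness is the case $u_{0,1}=u_{0,2}$. It is worth stressing that $q<1$ is \emph{no} obstruction here: only the monotonicity of $r\mapsto r^q$, and not its (absent) Lipschitz bound, enters. The genuine difficulty is the low time-regularity of weak solutions, which I would address by a Steklov time-averaging of \eqref{wp4c}, or equivalently by proving the inequality for the approximants $u_n$ and passing to the limit.

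\emph{Instantaneous shrinking and localization.} Both follow from the comparison principle once suitable compactly supported super-solutions are in hand. For \emph{localization}, the decisive feature is that $q<1$ together with $\sigma>0$ allows \emph{stationary} radial super-solutions with a dead core: seeking $S=S(|x|)$ that vanishes for $|x|\ge R_1$ and satisfies $-\Delta S^m+|x|^\sigma S^q\ge0$ for $|x|<R_1$, the free-boundary balance $m\beta-2=q\beta$ selects the exponent $\beta=2/(m-q)$ and the profile $S(r)\sim b\,(R_1-r)_+^{\beta}$, with $b$ admissible as soon as $b^{m-q}$ is small relative to $R_1^\sigma$. Since $S(0)$ then grows like a positive power of $R_1$, one may choose $R_1=R(\tau)$ so large that $S$ dominates $u(\tau)$, and comparison from time $\tau$ yields $\mathrm{supp}\,u(t)\subseteq B(0,R(\tau))$ for all $t\ge\tau$. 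This uses as input \emph{instantaneous shrinking}, i.e.\ that $u(\tau)$ already has compact (possibly large) support; that in turn I would obtain from a time-dependent Kalashnikov--Evans--Knerr barrier, dominating the bounded datum at $t=0$ yet collapsing to compact support for every $t>0$. Here the growing weight $|x|^\sigma\to\infty$, combined with $q<1$, is exactly what makes the tail extinguish instantaneously, so that no decay assumption on $u_0$ is needed.

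\emph{Main obstacle.} The two steps requiring real care are the rigorous justification of the comparison computation at the level of weak solutions --- controlling the time-derivative term and the passage $\delta\to0$ --- and the explicit construction of the shrinking and localization barriers, particularly arranging that they dominate $u$ near the origin, where the weight $|x|^\sigma$ degenerates and provides no help, while still vanishing at a controlled radius. Every other part of the theorem is then a direct consequence of comparison.
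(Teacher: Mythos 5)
The paper does not actually prove Theorem~\ref{th.wp}: it is quoted verbatim from \cite[Theorem~1.1]{ILS22}, and the text refers the reader to Sections~2 and~3 of that work. So the relevant comparison is with the strategy implemented there, and your architecture is indeed the standard one and broadly the right one: regularized uniformly parabolic problems, uniform $L^\infty$ bounds by the maximum principle, local energy estimates giving \eqref{wp4b}, compactness to pass to the limit, and then barriers plus comparison for instantaneous shrinking and localization. Your dead-core supersolution $S(r)\sim b(R_1-r)_+^{2/(m-q)}$ with the exponent balance $m\beta-2=q\beta$, smallness of $b^{m-q}$ relative to $R_1^\sigma$, and the observation that $\sigma>0$ is what removes any decay assumption on $u_0$ for instantaneous shrinking, are all correct and consonant with the constructions in \cite{ILS22}.

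There is, however, a genuine gap in what you yourself identify as the core step: comparison and uniqueness for \emph{arbitrary} weak solutions in the sense of Definition~\ref{def.wp}. First, your proposed fallback --- ``proving the inequality for the approximants $u_n$ and passing to the limit'' --- is not equivalent to the Steklov argument: it compares only the particular solutions obtained as limits of your scheme, so it yields neither uniqueness nor the stated comparison principle for all weak solutions; some device that sees two arbitrary weak solutions simultaneously (Ole\u{\i}nik-type duality with a smoothed coefficient $(u_1^m-u_2^m)/(u_1-u_2)$, or a rigorous Steklov/sign-test argument) is unavoidable. Second, and more concretely, the computation $\frac{d}{dt}\int(u_1-u_2)_+\,dx\le 0$ presupposes integrability that is not available: $u_0$ is merely bounded, with no decay, so $(u_1-u_2)_+$ need not belong to $L^1(\real^N)$ at small times, and the test function $p_\delta(u_1^m-u_2^m)$ is not compactly supported as \eqref{wp4c} requires. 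Inserting a spatial cutoff $\chi_R$ produces the error term $\int p_\delta(u_1^m-u_2^m)\,\nabla(u_1^m-u_2^m)\cdot\nabla\chi_R\,dx$, and since \eqref{wp4b} provides only a \emph{local} $L^2$ bound on $\nabla u^m$, there is no reason this term vanishes as $R\to\infty$; the unbounded weight $|x|^\sigma$ cannot rescue it, because in your argument the absorption term is discarded by sign before it can be used. Controlling these tails --- e.g.\ by first comparing with explicit supersolutions on exterior domains, or by working on bounded domains with controlled boundary data before globalizing --- is precisely the technical content of \cite[Sections~2--3]{ILS22} that your sketch elides, whereas the time-regularity issue you flag is the more routine of the two. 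A minor further point: you apply comparison between $u$ and the barriers $S$, which are supersolutions rather than solutions, so the comparison principle must be established for sub/supersolution pairs, a slightly stronger statement than the solution-to-solution version you outline.
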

For a proof, we refer the reader to \cite[Sections~2 and~3]{ILS22}, while similar results for the semilinear case $m=1$ follow by a simple adaptation of the proofs. With these preparations, we are ready to prove our first main result. This is done by adapting an argument from \cite{BLS02}.
\begin{proof}[Proof of Theorem~\ref{th.1}]
According to Theorem~\ref{th.wp}, we may assume without loss of generality that $u_0$ is compactly supported and that there exists $R>0$ such that
\begin{equation}\label{loc}
{\rm supp}\,u(t)\subset B(0,R), \qquad {\rm for \ any} \ t>0.
\end{equation}
Owing to the non-negativity of $u$, it follows from Eq.~\eqref{eq1} that for any $T>0$ and $t\in(0,T)$ we have
\begin{equation}\label{interm2}
\int_{t}^T\int_{\real^N}|x|^{\sigma}u^q(s,x)\,dx\,ds\leq \|u(T)\|_{1}+\int_{t}^T\int_{\real^N}|x|^{\sigma}u^q(s,x)\,dx\,ds\leq \|u(t)\|_1.
\end{equation}
Pick now $b\in\real$ such that
\begin{equation}\label{interm3}
\frac{\sigma}{N}+1-q<b<\frac{\sigma^*}{N}+1-q.
\end{equation}
We then infer from \eqref{interm2} that
\begin{equation}\label{interm4}
\begin{split}
\|u(t)\|_1&\geq\int_t^T\|u(s)\|_{\infty}^{-b}\int_{\real^N}|x|^{\sigma}\|u(s)\|_{\infty}^bu^q(s,x)\,dx\,ds\\
&\geq\int_t^T\|u(s)\|_{\infty}^{-b}\int_{\real^N}|x|^{\sigma}u^{b+q}(s,x)\,dx\,ds.
\end{split}
\end{equation}
Next, on the one hand we deduce from \eqref{loc}, H\"{o}lder's inequality and the non-negativity of $u$ that, for any $s\in(t,T)$
\begin{equation*}
\begin{split}
\|u(s)\|_1&=\int_{B(0,R)}|x|^{\sigma/(b+q)}u(s,x)|x|^{-\sigma/(b+q)}\,dx\\
&\leq\left(\int_{B(0,R)}|x|^{\sigma}u^{b+q}(s,x)\,dx\right)^{1/(b+q)}\left(\int_{B(0,R)}|x|^{-\sigma/(b+q-1)}\,dx\right)^{(b+q-1)/(b+q)}\\
&\leq C\left(\int_{\real^N}|x|^{\sigma}u^{b+q}(s,x)\,dx\right)^{1/(b+q)},
\end{split}
\end{equation*}
since \eqref{interm3} guarantees that $\sigma/(b+q-1)<N$. We thus find that
\begin{equation}\label{interm5}
\|u(s)\|_1^{b+q}\leq C\left(\int_{\real^N}|x|^{\sigma}u^{b+q}(s,x)\,dx\right).
\end{equation}
On the other hand, since $u$ is a subsolution to the porous medium equation if $m>1$ or to the heat equation if $m=1$, it follows from the well-known regularizing effect of the porous medium equation (see for example \cite[Theorem~2.1]{VazSmooth}) if $m>1$ or from the standard representation formula for the heat equation if $m=1$ that
$$
\|u(s)\|_{\infty}\leq C(s-t)^{-\theta}\|u(t)\|_1^{2\theta/N}, \qquad s\geq t,
$$
where $\theta=N/(N(m-1)+2)$. Consequently, taking into account that $b>0$, we have
\begin{equation}\label{interm6}
\|u(s)\|_{\infty}^{-b}\geq C(s-t)^{b\theta}\|u(t)\|_1^{-2b\theta/N}.
\end{equation}
Combining the estimates~\eqref{interm4}, \eqref{interm5} and~\eqref{interm6} gives, for any $s\in(t,T)$, that
$$
\|u(t)\|_1\geq C\int_t^{T}(s-t)^{b\theta}\|u(t)\|_{1}^{-2b\theta/N}\|u(s)\|_1^{b+q}\,ds,
$$
or equivalently
\begin{equation}\label{interm7}
\int_t^{T}(s-t)^{b\theta}\|u(s)\|_1^{b+q}\,ds\leq C\|u(t)\|_1^{(N+2b\theta)/N}.
\end{equation}
Since $\|u(s)\|_1\geq\|u(T)\|_1$ for $s\in(t,T)$ by~\eqref{eq1}, we further infer from \eqref{interm7} that
$$
C\|u(t)\|_1^{(N+2b\theta)/N}\geq\|u(T)\|_1^{b+q}\int_t^T(s-t)^{b\theta}\,ds=\frac{1}{b\theta+1}(T-t)^{b\theta+1}\|u(T)\|_1^{b+q},
$$
which can be written in an equivalent form as
\begin{equation}\label{interm8}
\|u(T)\|_1\leq C(T-t)^{-(b\theta+1)/(b+q)}\|u(t)\|_1^{(N+2b\theta)/N(b+q)},
\end{equation}
which holds true for any $t\in(0,T)$. Let us notice that, when $m>1$,
\begin{equation*}
\begin{split}
N+2b\theta-N(b+q)&=N(1-q)-\frac{N^2(m-1)}{N(m-1)+2}b\\
&=\frac{N}{N(m-1)+2}\left[(1-q)(N(m-1)+2)-N(m-1)b\right]>0,
\end{split}
\end{equation*}
since we deduce from \eqref{interm3} that
$$
b<1-q+\frac{\sigma^*}{N}=\frac{(1-q)(N(m-1)+2)}{N(m-1)}.
$$
The adaptation for $m=1$ is immediate as then $\theta=N/2$ and the terms involving $b$ just cancel in the previous calculation. We thus have $N+2b\theta>N(b+q)$ and it follows from \cite[Lemme~4.1]{St65} that $u$ vanishes in finite time.
\end{proof}

\section{Proof of Theorem \ref{th.2}}\label{sec.th2}

Throughout this section, we fix $m>1$, $q\in(0,1)$ and $\sigma\geq\sigma^*$. Before starting the proof of Theorem~\ref{th.2}, we need a preparatory result on the availability of suitable stationary supersolutions to Eq.~\eqref{eq1}.
\begin{lemma}\label{lem.super}
Given $a\geq(\sigma+2)/(m-q)$ and $A>0$, $R>0$ satisfying~\eqref{interm1}, the function
$$
S_{a,A}(x):=A|x|^a, \qquad x\in\real^N,
$$
is a supersolution to Eq.~\eqref{eq1} on $(0,\infty)\times B(0,R)$.
\end{lemma}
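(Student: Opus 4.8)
The plan is to verify directly that $S_{a,A}(x) = A|x|^a$ satisfies the differential inequality $-\Delta S_{a,A}^m + |x|^\sigma S_{a,A}^q \ge 0$ on $B(0,R)\setminus\{0\}$, since $S_{a,A}$ is time-independent so the $\partial_t$ term vanishes and being a supersolution reduces to this elliptic inequality. First I would compute $S_{a,A}^m = A^m|x|^{am}$ and apply the radial Laplacian. For a radial power $|x|^p$ one has $\Delta |x|^p = p(p+N-2)|x|^{p-2}$, so with $p = am$,
\begin{equation*}
\Delta S_{a,A}^m = A^m\, am\,(am+N-2)\,|x|^{am-2}.
\end{equation*}
The absorption term is $|x|^\sigma S_{a,A}^q = A^q |x|^{\sigma+aq}$. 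The supersolution inequality thus becomes
\begin{equation*}
A^q |x|^{\sigma+aq} \ge A^m\, am\,(am+N-2)\,|x|^{am-2}, \qquad x\in B(0,R)\setminus\{0\}.
\end{equation*}

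The key observation is that the two powers of $|x|$ must be compared. The hypothesis $a \ge (\sigma+2)/(m-q)$ is exactly equivalent to $a(m-q) \ge \sigma+2$, i.e. $am-2 \ge \sigma+aq$, so the exponent on the right is at least the exponent on the left. Since $|x|\le R$ on the ball, the factor $|x|^{(am-2)-(\sigma+aq)}$ is bounded above by $R^{a(m-q)-\sigma-2}$ (the exponent being non-negative). Dividing through by $A^q|x|^{\sigma+aq}$, it therefore suffices to check that
\begin{equation*}
A^{m-q} R^{a(m-q)-\sigma-2}\, am\,(am+N-2) \le 1,
\end{equation*}
which is precisely the standing hypothesis~\eqref{interm1}. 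This closes the pointwise inequality on $B(0,R)\setminus\{0\}$.

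The main obstacle I anticipate is not the computation but the behavior at the origin, where $|x|^{am-2}$ may be singular, and the precise sense in which a weak supersolution must be verified according to Definition~\ref{def.wp}. Since $a\ge(\sigma+2)/(m-q) > \sigma/(1-q)$ forces $am$ to be reasonably large, the function $S_{a,A}^m = A^m|x|^{am}$ lies in $H^1_{\mathrm{loc}}$ and $|x|^\sigma S_{a,A}^q$ is locally integrable, so the weak formulation is meaningful; one then upgrades the pointwise inequality (valid away from $0$) to the distributional inequality tested against non-negative $\zeta\in C_c^1$ by integration by parts, checking that the contribution of a small sphere $\{|x|=\e\}$ around the origin vanishes as $\e\to0$ thanks to the bound $am+N-2>0$ on the relevant boundary flux. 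I would therefore carry out the steps in the order: (i) reduce to the elliptic inequality, (ii) compute $\Delta S_{a,A}^m$ explicitly, (iii) reduce the inequality to~\eqref{interm1} using $a(m-q)\ge\sigma+2$ and $|x|\le R$, and (iv) justify the weak/distributional formulation near the origin, which is where the care is genuinely needed.
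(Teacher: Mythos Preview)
Your proposal is correct and matches the paper's proof: compute $\Delta S_{a,A}^m = am(am+N-2)A^m|x|^{am-2}$, compare with $A^q|x|^{\sigma+aq}$, and use $a(m-q)\ge\sigma+2$ together with $|x|\le R$ to reduce the supersolution inequality to~\eqref{interm1}. The paper stops at the pointwise computation and does not discuss the weak formulation at the origin (your step~(iv)); one inessential slip is that your aside $(\sigma+2)/(m-q)>\sigma/(1-q)$ is actually reversed when $\sigma\ge\sigma^*$, but this plays no role in the argument.
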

\begin{proof}
Setting $l:=a(m-q)-\sigma-2\geq0$, and fixing some $x\in B(0,R)$, we obtain by direct calculation that
\begin{equation*}
\begin{split}
\partial_t S_{a,A}(x) - \Delta S_{a,A}^m(x) + |x|^{\sigma}S_{a,A}^q(x) &=-am(am+N-2)A^m|x|^{am-2}+A^q|x|^{\sigma+aq}\\
&=A^q|x|^{\sigma+aq}\left[1-am(am+N-2)A^{m-q}|x|^{l}\right]\\
&\geq A^q|x|^{\sigma+aq}\left[1-am(am+N-2)A^{m-q}R^l\right]\geq0,
\end{split}
\end{equation*}
where the last inequality follows from~\eqref{interm1}.
\end{proof}
We are now in a position to complete the proof of Theorem~\ref{th.2}.
\begin{proof}[Proof of Theorem~\ref{th.2}]
It follows from \eqref{cond.u0a} that
$$
u_0(x)\leq S_{a,A}(x)=A|x|^a, \qquad {\rm for \ any} \ x\in B(0,R),
$$
and from \eqref{cond.u0b} and \eqref{wp0} that
$$
u(t,x)\leq\|u_0\|_{\infty}\leq S_{a,A}(x), \qquad (t,x)\in(0,\infty)\times\partial B(0,R).
$$
Noticing that
$$
\frac{\sigma}{1-q}\geq\frac{\sigma+2}{m-q}, \qquad {\rm for} \ \sigma\geq\sigma^*
$$
and recalling that $A$ and $R$ satisfy~\eqref{interm1}, Lemma~\ref{lem.super} and the comparison principle in Theorem~\ref{th.wp} entail that
$$
u(t,x)\leq S_{a,A} = A|x|^a, \qquad (t,x)\in(0,\infty)\times B(0,R).
$$
On the one hand, the previous inequality implies that, for $(t,x)\in (0,\infty)\times B(0,R)$,
\begin{equation}\label{th2.int}
|x|^{\sigma}u^q(t,x)\geq\left(\frac{u(t,x)}{A}\right)^{\sigma/a}u^q(t,x)=A^{-\sigma/a}u^{(aq+\sigma)/a}(t,x).
\end{equation}
On the other hand, for $(t,x)\in (0,\infty)\times(\real^N\setminus B(0,R))$, we have $|x|\geq R$, which gives, along with~\eqref{wp0},
\begin{equation}\label{th2.ext}
|x|^{\sigma}u^q(t,x)=|x|^{\sigma}\frac{\|u_0\|_{\infty}^{\sigma/a}}{\|u_0\|_{\infty}^{\sigma/a}}u^q(t,x)\geq\frac{R^{\sigma}}{\|u_0\|_{\infty}^{\sigma/a}}u^{(aq+\sigma)/a}(t,x).
\end{equation}
Introducing
$$
B:=\min\{A^{-\sigma/a},R^{\sigma}\|u_0\|_{\infty}^{-\sigma/a}\},
$$
we obtain from~\eqref{th2.int} and~\eqref{th2.ext} that
$$
|x|^{\sigma}u^q(t,x)\geq Bu^{(aq+\sigma)/a}(t,x), \qquad (t,x)\in(0,\infty)\times\real^N.
$$
Therefore, the comparison principle gives that $u\leq v$ in $(0,\infty)\times\real^N$, where $v$ is the solution to
$$
\partial_t v-\Delta v^m+Bv^{(aq+\sigma)/a}=0, \qquad {\rm in} \ (0,\infty)\times\real^N
$$
with the same initial condition $v(0)=u_0$ in $\real^N$. Since $(aq+\sigma)/a<1$ by the choice of $a$, $v$ vanishes in finite time and so does $u$.
\end{proof}

We provide more precise examples of initial data for which finite time extinction holds true when $\sigma\ge \sigma^*$ in the following consequence of Theorem~\ref{th.2}.
\begin{corollary}\label{cor.2}
Let $m>1$, $0<q<1$, $\sigma\geq\sigma^*$ and consider $a>\sigma/(1-q)$ and $u_0\in L^{\infty}_{+}(\real^N)$ such that
\begin{equation}
u_0(x)\leq A_0|x|^a, \qquad x\in\real^N, \label{zz}
\end{equation}
for some $A_0>0$. Then there exists $M>0$ depending only on $m$, $a$, $N$, $q$, $\sigma$ and $A_0$ such that, if $\|u_0\|_{\infty}\leq M$, then the solution $u$ to the Cauchy problem~\eqref{eq1}-\eqref{init.cond} vanishes in finite time.
\end{corollary}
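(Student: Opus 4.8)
The plan is to deduce Corollary~\ref{cor.2} directly from Theorem~\ref{th.2} by choosing the parameters $A$ and $R$ of that theorem in terms of the data $A_0$ and $\|u_0\|_{\infty}$. Since the bound~\eqref{zz} holds on all of $\real^N$, it holds in particular on every ball $B(0,R)$, so the flatness condition~\eqref{cond.u0a} is automatically satisfied with the choice $A:=A_0$, whatever the radius $R>0$. The only remaining requirements of Theorem~\ref{th.2} are then the smallness condition~\eqref{interm1} and the bound~\eqref{cond.u0b} on $\|u_0\|_{\infty}$, and the crux of the argument is that these two can be reconciled by a suitable choice of $R$ once $\|u_0\|_{\infty}$ is small enough.

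First I would record that the exponent $l:=a(m-q)-\sigma-2$ occurring in~\eqref{interm1} is positive. Indeed, as already used in the proof of Theorem~\ref{th.2}, one has $\sigma/(1-q)\geq(\sigma+2)/(m-q)$ whenever $\sigma\geq\sigma^*$, so the hypothesis $a>\sigma/(1-q)$ gives $a>(\sigma+2)/(m-q)$ and hence $l>0$. Next I would fix $A:=A_0$ and select $R$ so that~\eqref{cond.u0b} holds with equality (the case $u_0\equiv0$ being trivial), namely
\[
R:=\left(\frac{\|u_0\|_{\infty}}{A_0}\right)^{1/a}, \qquad \text{so that} \quad A_0R^a=\|u_0\|_{\infty}.
\]
With this choice, condition~\eqref{interm1} reads $A_0^{m-q}\,(\|u_0\|_{\infty}/A_0)^{l/a}\leq [am(am+N-2)]^{-1}$, which, since $l>0$, is equivalent to an upper bound $\|u_0\|_{\infty}\leq M$ with
\[
M:=\left(\frac{1}{am(am+N-2)}\right)^{a/l}A_0^{-(\sigma+2)/l},
\]
a constant depending only on $m$, $a$, $N$, $q$, $\sigma$ and $A_0$. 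Thus, whenever $\|u_0\|_{\infty}\leq M$, the pair $(A,R)=(A_0,R)$ fulfils all of~\eqref{interm1} and~\eqref{cond.u0}, and Theorem~\ref{th.2} yields finite time extinction.

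I do not expect a genuine obstacle here: the statement is essentially a reduction to Theorem~\ref{th.2}, and the only points needing care are the verification that $l>0$ (which is exactly where the standing assumptions $\sigma\geq\sigma^*$ and $a>\sigma/(1-q)$ enter) and the elementary algebra producing the threshold $M$. The one conceptual step worth flagging is the realisation that the free radius $R$ can be used to trade the smallness condition~\eqref{interm1} against the $L^\infty$-bound~\eqref{cond.u0b}: enlarging $R$ relaxes~\eqref{cond.u0b} but tightens~\eqref{interm1} (as $l>0$), so only for $\|u_0\|_{\infty}$ small can both hold simultaneously. One could instead optimise over all admissible $A\geq A_0$, but a short computation shows the admissible range of $\|u_0\|_{\infty}$ is largest precisely at $A=A_0$, so nothing is gained by the extra freedom.
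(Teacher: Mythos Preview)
Your proof is correct and follows the same strategy as the paper: set $A:=A_0$, verify $l>0$, and reduce to Theorem~\ref{th.2}. The only cosmetic difference is that the paper fixes $R:=(KA_0)^{-(m-q)/l}$ with $K:=[am(am+N-2)]^{1/(m-q)}$ so that~\eqref{interm1} holds with equality and then sets $M:=A_0R^a$, whereas you let $R$ depend on $\|u_0\|_\infty$ so that~\eqref{cond.u0b} holds with equality and read off $M$ from~\eqref{interm1}; a short computation shows both routes produce the identical threshold $M=[am(am+N-2)]^{-a/l}A_0^{-(\sigma+2)/l}$.
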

\begin{proof}
We set
$$
K:=\left[am(am+N-2)\right]^{1/(m-q)}, \qquad l:=a(m-q)-\sigma-2>0,
$$
and choose
$$
A=A_0, \qquad R=(KA_0)^{-(m-q)/l}, \qquad M=A_0R^a.
$$
Pick $u_0\in L_+^\infty(\real^N)$ satisfying~\eqref{zz} and $\|u_0\|_\infty\le M$. Then $A$ and $R$ satisfy~\eqref{interm1} and $u_0$ satisfies~\eqref{cond.u0}, so that we are in the hypothesis of Theorem~\ref{th.2}. An application of this theorem ends the proof.
\end{proof}

\bigskip

\noindent \textbf{Acknowledgements.} The authors are partially supported by the Spanish project PID2020-115273GB-I00. R. G. I. wants to thank for the hospitality and support of Institut de Math\'ematiques de Toulouse where part of this work has been done.

\bibliographystyle{plain}

\end{document}